\documentclass[reqno,a4paper,12pt]{amsart}
\usepackage{latexsym, amsmath, amssymb, amsthm, a4}

\usepackage[active]{srcltx}

\usepackage{amsfonts}

\usepackage{mathrsfs}

\newtheorem{theorem}{Theorem}[section]

\newtheorem{definition}{Definition}

\newtheorem{example}{Example}[section]
\newtheorem{remark}[theorem]{Remark}

\renewcommand{\a}{\alpha}
\renewcommand{\b}{\beta}

\newcommand{\z}{\zeta}

\renewcommand{\k}{\kappa}

\renewcommand{\l}{\lambda}

\newcommand{\m}{\mu}

\newcommand{\s}{\sigma}

\newcommand{\F}{\Phi}




\newcommand{\Z}{{\mathbb Z}}

\newcommand{\DD}{{\mathbb{D}}}





\newcommand{\Db}{{\mathbf D}}

\newcommand{\Fb}{{\mathbf F}}

\newcommand{\Ib}{{\mathbf I}}

\newcommand{\Tb}{{\mathbf T}}


\newcommand{\Sf}{\mathfrak S}


\newcommand{\Ec}{{\mathcal E}}






\newcommand{\dist}{{\rm dist}\,}

\newcommand{\supp}{\hbox{{\rm supp}}\,}





\newcommand{\pd}{\partial} 
\newcommand{\pa}{\bar{\partial}}






\newcommand{\Tr}{\operatorname{Tr\,}}
\newcommand{\tr}{\operatorname{tr\,}}


















\theoremstyle{plain}

\newtheorem*{theorem*}{Theorem}

\theoremstyle{definition}








\newcommand{\la}{\langle}
\newcommand{\ra}{\rangle}



\newcommand{\beq}{\begin{equation}}
\newcommand{\eeq}{\end{equation}}

\numberwithin{equation}{section}
\numberwithin{figure}{section}

\begin{document}

\title[Trace Class singular]{Trace class Toeplitz operators with singular symbols}

\author{Grigori Rozenblum }

\address{ Chalmers University of Technology and The University of Gothenburg (Sweden); St.Petersburg State University, Dept. Math. Physics (St.Petersburg, Russia)}

\email{grigori@chalmers.se}
\author {Nikolai Vasilevski}
\address{Cinvestav (Mexico, Mexico-city)}
\email{nvasilev@cinvestav.mx}

\subjclass[2010]{47A75 (primary), 58J50 (secondary)}
\keywords{Bergman space, Toeplitz operators, distributional symbol}
\dedicatory{To Armen Sergeev in occasion of his $70^{th}$ anniversary}
\begin{abstract}
We characterize the trace class membership of Toeplitz operators with distributional symbols acting on the Bergman space on the unit disk. The Berezin transform of distributions, introduced in the paper, yields a formula for the trace. Several instructive examples are also given.
\end{abstract}
\thanks{The first-named author is supported by grant RFBR No 17-01-00668. He is grateful to CINVESTAV for hospitality and support.}

\maketitle



\section{Introduction}

The classical Bergman space $\mathcal{A}^2(\mathbb{D})$ is a closed subspace in $L_2(\mathbb{D}, dA)$, where $dA(z)=\frac{1}{\pi}dxdy$ is the normalized Lebesgue measure, which consists of functions analytic on the unit disk $\mathbb{D}$. It is a reproducing kernel Hilbert space, whose reproducing kernel is given by $K_z(w) = (1-\overline{z}w)^{-2}$. The orthogonal Bergman projection $P: L_2(\mathbb{D}, dA) \rightarrow \mathcal{A}^2(\mathbb{D})$ is given then by $(Pf)(z) = \langle f, K_z \rangle$.

Classical Toeplitz operator $T_a$, with symbol $a \in L_{\infty}(\mathbb{D})$, is defined as the compression of the multiplication by $a$ operator onto the Bergman space, i.e., it acts on $\mathcal{A}^2(\mathbb{D})$ as follows $T_af= P(af)$. This classical definition of Toeplitz operators has been extended then to various more general situations, in particular, to weighted Bergman spaces, to unit ball case and to more general multidimensional domains of holomorphy, to measure and distributional symbols, etc. 

Toeplitz operators with strongly singular, distributional symbols were considered first, probably,  in \cite{AlexRoz} and \cite{PeTaVi3}. Recently, in \cite{RV1,RV2}, a new approach to Toeplitz operators has been proposed. The authors use the language of sesquilinear forms, which permit them not only to cover all previous generalizations in a unified form, but also extend the notion of Toeplitz operators to highly singular symbols.

The approach of \cite{RV1,RV2} goes as follows (for proofs and details see \cite{RV1,RV2}). Let $F(\cdot,\cdot)$ be a bounded sesquilinear form on a reproducing kernel Hilbert space $\mathcal{A}$, with $K_z$ being its reproducing kernel. Then Toeplitz operator $T_F$, defined by the form $F$, acts on $\mathcal{A}$ as follows: $(T_ff)(z) = F(f,K_z)$. In particular, \cite{RV2} treats Toeplitz operators defined by $k$-Carleson measures for derivatives. Recall in this connection that the measure $\mu$ is called $k$-Carleson if
\begin{equation*}
 \left|\int_{\mathbb{D}} |f^{(k)}(z)|^2 d\mu(z) \right| \leq C \|f\|^2_{\mathcal{A}^2(\mathbb{D})},
\end{equation*}
where the constant $C$ does not depend on $f \in \mathcal{A}^2(\mathbb{D})$.

In the present paper we start the study the Schatten class membership of Toeplitz operators defined by sesquilinear forms. To demonstrate the ideas and to make our considerations more transparent avoiding unnecessary technicalities, we consider in this paper the simplest case: non-weighted Berman space $\mathcal{A}^2(\mathbb{D})$, Toeplitz operators defined by $k$-Carleson measures, and the trace class membership.
More general cases will be considered elsewhere.

Recall, for completeness, that the trace and Schatten class membership of Toeplitz operators with more regular symbols has been studied, first in \cite{Luecking}, and then in  \cite{ElFall,Pelaez,Zhu}, etc.

\section{A sufficient trace class condition}

Following the pattern in \cite{RV1,RV2}, we introduce now sesquilinear forms involving derivatives of functions $f,g$ and thus corresponding to derivatives of Carleson measures.
\begin{definition} Let $\m$ be a regular complex measure on $\mathbb{D}$ and $\a,\b$ be two nonnegative integers. We denote by $\Fb_{\a,\b,\m}$ the sesquilinear form
\begin{equation}\label{FormDer}
   \Fb_{\a,\b,\m}[f,g]=(-1)^{\a+\b}\int_{\DD}\partial^{\a}f(z)\overline{\partial^\b g(z)}d\m(z), \, f,g\in\mathcal{A}^2(\mathbb{D}).
\end{equation}
\end{definition}

In \cite{RV2} a condition was found for the sesquilinear form \eqref{FormDer}  to be bounded or compact. This condition is sufficient for any measure $\m$, and it is also necessary for a positive measure and $\a=\b$. Such condition is expressed using the notion of $k$--Carleson measures. Our aim is to find a condition for a $k$-Carleson measure to generate a trace class operator. We denote by $\Tb_{\a,\b,\m}:=\Tb_{\Fb_{\a,\b,\m}}$ the Toeplitz operator defined by this form, or, shorter, $\Tb_{\Fb}$, with $\Fb=\pd^{\a}\pa^{\b}\m$.

Note first that for a measure $\mu$ with compact support in $\DD$ the Toeplitz operator $\Tb_{\a,\b,\m}$ has singular numbers decaying exponentially, therefore $\Tb_{\a,\b,\m}$ belongs to all Schatten classes $\Sf^p$, $p>0$, for any $\a,\b$. We recall the reasoning in \cite{RV2}.

\begin{theorem}\label{Compact Support} Let $\m$ be a measure with compact support in $\DD$ and let  $\a,\b$ be some nonnegative integers.
Then
 For any $f,g\in\mathcal{A}^2(\mathbb{D})$, the integral in \eqref{FormDer} converges, moreover,
\begin{equation}\label{FormDerTransf}
   \Fb[f,g]= \Fb_{\a,\b,\m}[f,g]=(\partial^{\a}\bar{\partial}^{\b}\m, f\bar{g}),
\end{equation}
where the derivatives are understood in the sense of distributions in $\Ec'(\DD)$ and the parentheses mean the intrinsic paring of $\Ec'(\DD) $ and $\Ec(\DD)$. The singular numbers $s_n(\Tb_{\Fb})$ of the operator $\Tb_{\Fb}$ satisfy the estimate
\begin{equation}\label{snumbers}
    s_n(\Tb_{\Fb})\le C \exp(-n\s),
\end{equation}
where $\s>0$ is some constant determined by the measure $\m$ and integers $\a,\b$.

\end{theorem}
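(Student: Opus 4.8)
The plan is to extract all three assertions from a single elementary fact: since $\supp\m$ is a compact subset of the open disk, there is $r_{0}<1$ with $\supp\m\subset\overline{D(0,r_{0})}$, and on such a fixed compact subdisk the derivatives of a Bergman function are dominated by its norm. Writing $f=\sum_{n\ge 0}c_{n}e_{n}$ in the orthonormal basis $e_{n}(z)=\sqrt{n+1}\,z^{n}$ of $\mathcal{A}^{2}(\mathbb{D})$ (or, equivalently, applying the Cauchy integral formula over a circle of radius $r_{1}\in(r_{0},1)$) gives $\sup_{|z|\le r_{0}}|\pd^{\a}f(z)|\le C_{\a,r_{0}}\|f\|_{\mathcal{A}^{2}(\mathbb{D})}$ for every $\a$, and likewise for $\pd^{\b}g$. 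Since $\m$ has finite total variation, the integrand in \eqref{FormDer} is then bounded on $\supp\m$, the integral converges absolutely, and $|\Fb_{\a,\b,\m}[f,g]|\le\|\m\|\,C_{\a,r_{0}}C_{\b,r_{0}}\|f\|\,\|g\|$; in particular the form is bounded on $\mathcal{A}^{2}(\mathbb{D})$.

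For \eqref{FormDerTransf} I would use that a regular complex measure of finite variation is a distribution of order zero, so, having compact support in $\mathbb{D}$, both $\m$ and $\pd^{\a}\pa^{\b}\m$ lie in $\Ec'(\mathbb{D})$. Pairing $\pd^{\a}\pa^{\b}\m$ with $\f=f\bar g$, which belongs to $\Ec(\mathbb{D})=C^{\infty}(\mathbb{D})$ because $f,g$ are holomorphic, the definition of the distributional derivative gives $(\pd^{\a}\pa^{\b}\m,f\bar g)=(-1)^{\a+\b}\int_{\mathbb{D}}\pd^{\a}\pa^{\b}(f\bar g)\,d\m$. Now invoke holomorphy: since $\pa f=0$, the Leibniz rule yields $\pa^{\b}(f\bar g)=f\,\overline{\pd^{\b}g}$, and since $\overline{\pd^{\b}g}$ is antiholomorphic, $\pd^{\a}\bigl(f\,\overline{\pd^{\b}g}\bigr)=(\pd^{\a}f)\,\overline{\pd^{\b}g}$. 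Substituting recovers precisely the right-hand side of \eqref{FormDer}, which is \eqref{FormDerTransf}.

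The exponential bound on the singular numbers follows from the same estimate applied to high-frequency tails together with finite-rank truncation. Let $P_{N}$ be the orthogonal projection of $\mathcal{A}^{2}(\mathbb{D})$ onto the polynomials of degree $<N$. For $f\perp P_{N}\mathcal{A}^{2}(\mathbb{D})$, running the estimate of the first paragraph over the tail $\sum_{n\ge N}c_{n}e_{n}$ and summing the resulting series (geometric up to a polynomial prefactor $n^{2\a+1}$, which is absorbed by enlarging $r_{0}$ slightly) shows that, for any fixed $r_{1}\in(r_{0},1)$, there is $C$ with $\sup_{|z|\le r_{0}}|\pd^{\a}f(z)|\le C\,r_{1}^{\,N}\|f\|$ for all large $N$. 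Using the identity $\langle\Tb_{\Fb}h,g\rangle_{\mathcal{A}^{2}(\mathbb{D})}=\Fb_{\a,\b,\m}[h,g]$, which is immediate from the defining relation $(\Tb_{\Fb}h)(z)=\Fb_{\a,\b,\m}[h,K_{z}]$ and the reproducing property, one gets $\|\Tb_{\Fb}(I-P_{N})\|=\sup_{\|f\|,\|g\|\le 1}\bigl|\Fb_{\a,\b,\m}[(I-P_{N})f,g]\bigr|\le C'\,r_{1}^{\,N}$. Since $\Tb_{\Fb}P_{N}$ has rank at most $N$ and the $(N+1)$-st singular number of $\Tb_{\Fb}$ equals the operator-norm distance from $\Tb_{\Fb}$ to the operators of rank at most $N$, we obtain $s_{N+1}(\Tb_{\Fb})\le\|\Tb_{\Fb}-\Tb_{\Fb}P_{N}\|\le C'\,r_{1}^{\,N}$, i.e. \eqref{snumbers} with $\s=\ln(1/r_{1})>0$; in particular $\Tb_{\Fb}\in\Sf^{p}$ for every $p>0$.

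I do not expect a genuine obstacle: the whole argument reduces to the single fact that restriction of a Bergman function (and of its derivatives) to a fixed compact subdisk is an operator with exponentially decaying singular numbers. The only points needing a little care are the bookkeeping of the polynomial prefactor in the tail series — harmless, as noted — and the Wirtinger-derivative Leibniz computations in the second paragraph, which use nothing beyond $\pa f=0$ for holomorphic $f$.
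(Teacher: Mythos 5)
Your argument is correct, and for the central assertion --- the exponential decay \eqref{snumbers} --- it takes a genuinely different route from the paper. The paper factorizes $\Tb_{\Fb}$ through restriction operators between Bergman spaces of nested disks, $\Tb_{\Fb}=\Tb_{\Fb}(\mathcal{A}^2(\Db'))\,\Ib_{\Db'\Rrightarrow \Db}\,\Ib_{\Db\Rrightarrow\DD}$, reduces to positive measures via Ky Fan's inequality, and then quotes Parfenov's result that the embedding $\mathcal{A}^2(\Db)\to\mathcal{A}^2(\Db')$ of a larger disk into a strictly smaller concentric one has exponentially decaying singular numbers. You instead truncate by the projection $P_N$ onto polynomials of degree $<N$, prove the tail bound $\sup_{|z|\le r_0}|\pd^{\a}((I-P_N)f)(z)|\le C r_1^N\|f\|$ by hand in the monomial basis, and invoke the approximation-number characterization $s_{N+1}(\Tb_{\Fb})=\dist(\Tb_{\Fb},\{\text{rank}\le N\})$. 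Your version is self-contained (no external citation, and no need for the Ky Fan reduction to positive measures, since you work directly with the total variation $|\m|$); the paper's version is shorter on the page and connects to the known sharp asymptotics for embedding operators, which give more than an upper bound. Your explicit verification of \eqref{FormDerTransf} via the Wirtinger--Leibniz computation is also a useful addition: the paper states the identity but defers its justification to \cite{RV2}. The only bookkeeping point worth stating carefully in your write-up is the Cauchy--Schwarz step in the tail estimate, $\sum_{n\ge N}|c_n|\sqrt{n+1}\,n^{\a}r_0^{n-\a}\le\|f\|\bigl(\sum_{n\ge N}(n+1)n^{2\a}r_0^{2(n-\a)}\bigr)^{1/2}$, after which the polynomial prefactor is indeed absorbed by any $r_1\in(r_0,1)$, exactly as you say.
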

\begin{proof}  Due to the Ky Fan's inequality for singular numbers of compact operators, it is sufficient to establish \eqref{snumbers} for a positive measures $\m$.
Consider a closed disk $\Db\subset \DD$ with radius $R$ such that the support of $\m$ lies strictly inside $\Db$, thus $\dist(z,\pd \Db)>r>0$ for all $z\in\supp \m$.
The Cauchy integral formula implies that for any $z\in\supp \m$ and any $\a\in\Z_+$,
\begin{equation}\label{est in disk}
|\partial^\a f(z)|^2\le C_{\a}\int_{\partial \Db}|f(\z)|^2dA(\z),
\end{equation}
for any function $f\in\mathcal{A}^2(\mathbb{D})$. Here the constant $C_\a$ depends only on  $\a$, but does not depend on $f$ and $z$. By the same reason,  for any $z\in\supp\m$, the estimate \eqref{est in disk} holds, with the domain of integration replaced by $\Db'$, the disk concentric with $\Db$, but with radius $R-r/2$. By the Cauchy-Schwartz inequality,
\begin{equation}\label{est in disk3}
    |\Fb_{\a,\b,\m}[f,g]|\le \left(\int_{\textrm{supp}\, \m}|\partial^\a f(z)|^2 d|\m(z)|\right)^{\frac12}\left(\int_{\textrm{supp}\, \m}|\partial^\b g(z)|^2 d|\m(z)|\right)^{\frac12},
\end{equation}
for all $f,g\in\mathcal{A}^2(\mathbb{D})$, and then, due to \eqref{est in disk}, we obtain the estimate
\begin{equation}\label{est 4}
  |\Fb_{\a,\b,\m}[f,g]|\le C'_{\a}C'_{\b}|\m|(\Db)\|f\|_{L^2(\Db')}\|g\|_{L^2(\Db')}.
\end{equation}
The last relation means that the sesquilinear form $\Fb_{\a,\b,\m}$ is bounded not only in $\mathcal{A}^2(\mathbb{D})$, but in the  spaces $\mathcal{A}^2(\Db)$ and $\mathcal{A}^2(\Db')$ as well.

Now we represent our Toeplitz operator $\Tb_\Fb$ as the composition
\begin{equation}\label{est5}
    \Tb_{\Fb}=\Tb_{\Fb}(\mathcal{A}^2(\Db'))\Ib_{\Db'\Rrightarrow \Db}\Ib_{\Db\Rrightarrow\DD},
\end{equation}
where $\Ib_{\Db'\Rrightarrow \Db}:\mathcal{A}^2(\Db)\to\mathcal{A}^2(\Db')$, $\Ib_{\Db\Rrightarrow \DD}:\mathcal{A}^2(\DD)\to\mathcal{A}^2(\Db)$ are  operators generated by the restriction of functions on a larger set to the corresponding smaller set. The equality \eqref{est5} can be easily checked by writing the sesquilinear forms of operators on the left-hand and on the right-hand side. Finally, of the three operators on the right-hand side in \eqref{est5}, the first and the third ones are bounded, while the middle one, the operator generated by the embedding of the disk $\Db'$ to $\Db$, is known to have an exponentially fast decaying sequence of singular numbers see, e.g., \cite{parfenov}.
\end{proof}

The above result demonstrates that the 'quality' of the Toeplitz operator with distributional symbol, in the sense of its belonging to Schatten classes of compact operators, is determined by the behavior of the symbol near the boundary.

\section{The Berezin transform and the trace class}
Recall that, given a bounded operator $\Tb$ on the Bergman space $\mathcal{A}^2(\DD)$, its Berezin transform is defined by $\widetilde\Tb(z)=\la \Tb \k_z,\k_z\ra$,
where $\k_z(w)=\frac{1-|z|^2}{(1-\overline{z}w)^2}$ is the normalized reproducing kernel for $\mathcal{A}^2(\mathbb{D})$. The M\"obius invariant measure on $\DD$ is denoted by $d\l(z)=(1-|z|^2)^{-2} dA(z)$, where, as previous, $dA(z)= \frac{1}{\pi}dxdy$.

We recall now the basic result by K.Zhu (see \cite{Zhu}, Theorem 6.4 and Corollary~6.5).
\begin{theorem}\label{Zhu.gener.th}
Let $\Tb$ be a non-negative bounded operator in $\mathcal{A}^2(\mathbb{D})$. Then in the equality
\begin{equation}\label{fundamental property}
    \Tr \Tb=\int_{\DD}\widetilde\Tb(z)d\l(z),
\end{equation}
one side is finite as long as the other one is finite. The same assertion holds true if  the nonnegativity condition is replaced by the requirement that $\Tb$ belongs to the trace class $\Sf^1$.
\end{theorem}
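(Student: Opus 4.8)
The plan is to deduce the identity from the ``continuous resolution of the identity'' provided by the normalized reproducing kernels, together with Tonelli's theorem, handling the positive cone first and then reducing the general trace-class statement to it. \emph{Step 1 (resolution of the identity).} I would first record that for all $f,g\in\mathcal{A}^2(\DD)$
\begin{equation*}
  \int_{\DD}\la f,\k_z\ra\,\la\k_z,g\ra\,d\l(z)=\la f,g\ra .
\end{equation*}
This is the reproducing formula in disguise: since $\k_z=K_z/\|K_z\|$ with $\|K_z\|^2=K_z(z)=(1-|z|^2)^{-2}$, the reproducing property gives $\la f,\k_z\ra\,\la\k_z,g\ra=(1-|z|^2)^2 f(z)\overline{g(z)}$, and multiplying by $d\l(z)=(1-|z|^2)^{-2}\,dA(z)$ leaves precisely $f(z)\overline{g(z)}\,dA(z)$, whose integral over $\DD$ equals $\la f,g\ra$. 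Equivalently, writing $P_z$ for the rank-one orthogonal projection onto $\C\k_z$, one has $\int_{\DD}P_z\,d\l(z)=I$ in the weak sense.

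\emph{Step 2 (non-negative $\Tb$).} Fix an orthonormal basis $\{e_n\}$ of $\mathcal{A}^2(\DD)$ and write, using $\Tb=\Tb^{1/2}\Tb^{1/2}$ and Step 1 with $f=g=\Tb^{1/2}e_n$,
\begin{equation*}
  \la\Tb e_n,e_n\ra=\|\Tb^{1/2}e_n\|^2=\int_{\DD}\bigl|\la e_n,\Tb^{1/2}\k_z\ra\bigr|^2\,d\l(z).
\end{equation*}
Summing in $n$ and interchanging $\sum_n$ with $\int_{\DD}$ — legitimate because all terms are non-negative (Tonelli) — the inner sum becomes, by Parseval, $\sum_n|\la e_n,\Tb^{1/2}\k_z\ra|^2=\|\Tb^{1/2}\k_z\|^2=\la\Tb\k_z,\k_z\ra=\widetilde\Tb(z)$. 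Hence $\Tr\Tb=\int_{\DD}\widetilde\Tb(z)\,d\l(z)$ with both sides in $[0,+\infty]$, finite or infinite together. (No measurability issue arises, since $\widetilde\Tb$ is continuous, indeed real-analytic in $z$.)

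\emph{Step 3 (trace class $\Tb$).} For $\Tb\in\Sf^1$ I would split $\Tb=\re\Tb+i\,\im\Tb$ into self-adjoint real and imaginary parts, each still in $\Sf^1$, and then split each self-adjoint trace-class $S$ via its Jordan decomposition $S=S_+-S_-$ with $S_\pm\ge0$ and $S_\pm\in\Sf^1$. Step 2 applied to $S_\pm$ gives $\int_{\DD}|\widetilde{S_\pm}|\,d\l=\Tr S_\pm<\infty$, so $\widetilde S=\widetilde{S_+}-\widetilde{S_-}\in L^1(\DD,d\l)$ and $\int_{\DD}\widetilde S\,d\l=\Tr S_+-\Tr S_-=\Tr S$. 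Linearity of the Berezin transform and of the trace over the four non-negative pieces then yields the claim for $\Tb$; since $\Tr\Tb$ is finite by hypothesis, the content is that $\int_{\DD}\widetilde\Tb\,d\l$ converges absolutely to it.

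The only genuinely delicate point is the Tonelli interchange of the trace (an infinite sum over an orthonormal basis) with the integral over $\DD$; this is precisely why the positive cone is treated first, where non-negativity makes the interchange automatic, and the general trace-class statement is then obtained by the four-fold decomposition above. One should also note that Step 1 is used only in its scalar (weak) form, tested against the fixed vectors $\Tb^{1/2}e_n$, so no operator-valued integration theory is required.
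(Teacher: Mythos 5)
Your proof is correct. Note, however, that the paper does not prove this statement at all: it is quoted verbatim from K.~Zhu's book (Theorem 6.4 and Corollary 6.5 of \cite{Zhu}), so there is no in-paper argument to compare against. Your argument --- the weak resolution of the identity $\int_{\DD}\la f,\k_z\ra\la\k_z,g\ra\,d\l(z)=\la f,g\ra$, the factorization $\Tb=\Tb^{1/2}\Tb^{1/2}$ with Tonelli and Parseval for the positive cone, and the four-fold decomposition into positive trace-class pieces for the general case --- is precisely the standard proof of Zhu's result, and all the delicate points (non-negativity justifying the interchange, self-adjointness of $\Tb^{1/2}$ in moving it across the inner product, membership of $S_\pm$ in $\Sf^1$) are handled correctly.
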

We will apply Theorem \ref{Zhu.gener.th} to Toeplitz operators generated by measures. We start with the symmetric positive case.
\begin{theorem}\label{Th.distr} Let $\m$ be a regular non-negative $k$-Carleson measure, $k\ge 1$. Consider the Toeplitz operator $\Tb\equiv\Tb_{k,k;\m}$ generated by the distribution $\F=\pd^k\pa^k\m$. Then the operator $\Tb$ belongs to the trace class as soon as
\begin{equation}\label{finite}
    \int_{\DD}(1-|w|^2)^{-2-2k}d\m(w)<\infty,
\end{equation}
and in this case
\begin{equation}\label{trace}
    \tr \Tb=\left(\F,\frac{1}{(1-|w|^2)^2}\right).
\end{equation}
\end{theorem}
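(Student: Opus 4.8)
\medskip

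\noindent\textit{Proof strategy.} The plan is to reduce everything to Zhu's Theorem~\ref{Zhu.gener.th} together with one explicit computation of a Berezin transform. First, since $\m\ge 0$ we have $\Fb_{k,k,\m}[f,f]=\int_{\DD}|\pd^k f(z)|^2\,d\m(z)\ge 0$, so $\Tb$ is a non-negative bounded operator (boundedness being guaranteed by the $k$-Carleson hypothesis). Hence $\Tb\in\Sf^1$ if and only if $\Tr\Tb<\infty$, and by Theorem~\ref{Zhu.gener.th} the latter holds if and only if $\int_{\DD}\widetilde\Tb(z)\,d\l(z)<\infty$, with $\Tr\Tb$ equal to that integral whenever it is finite. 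So it suffices to evaluate $\widetilde\Tb$ and integrate it against $d\l$.

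Second, I would compute $\widetilde\Tb$. Using the identity $\widetilde\Tb(z)=\la\Tb\k_z,\k_z\ra=\Fb_{k,k,\m}[\k_z,\k_z]$ and the definition \eqref{FormDer} (note $(-1)^{2k}=1$), one gets $\widetilde\Tb(z)=\int_{\DD}|\pd^k_w\k_z(w)|^2\,d\m(w)$, the integral being finite for each fixed $z$ because $\m$ is $k$-Carleson and $\k_z\in\mathcal{A}^2(\DD)$. Differentiating $\k_z(w)=(1-|z|^2)(1-\overline z w)^{-2}$ $k$ times in $w$ gives $\pd^k_w\k_z(w)=(k+1)!\,(1-|z|^2)\,\overline z^{\,k}\,(1-\overline z w)^{-2-k}$, so
\[
\widetilde\Tb(z)=\big((k+1)!\big)^2(1-|z|^2)^2|z|^{2k}\int_{\DD}\frac{d\m(w)}{|1-\overline z w|^{4+2k}}.
\]

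Third, I would insert this into $\Tr\Tb=\int_{\DD}\widetilde\Tb(z)\,d\l(z)$; since $d\l(z)=(1-|z|^2)^{-2}dA(z)$ the factor $(1-|z|^2)^2$ cancels, and Tonelli's theorem (applicable, the integrand being non-negative) lets me interchange the two integrations:
\[
\Tr\Tb=\big((k+1)!\big)^2\int_{\DD}\Big(\int_{\DD}\frac{|z|^{2k}}{|1-\overline z w|^{4+2k}}\,dA(z)\Big)\,d\m(w).
\]
Now I would identify the inner integral $J(w):=\int_{\DD}|z|^{2k}|1-\overline z w|^{-4-2k}\,dA(z)$ in two complementary ways. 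For the \emph{finiteness} claim, the classical Forelli--Rudin type estimate (see, e.g., \cite{Zhu}) gives $\int_{\DD}|1-\overline z w|^{-4-2k}\,dA(z)\asymp(1-|w|^2)^{-2-2k}$ (here the relevant exponent $t=2+2k$ is positive since $k\ge1$), and the factor $|z|^{2k}$ is harmless, being bounded by $1$ and bounded below on $\{|z|\ge\frac12\}$; hence $J(w)\asymp(1-|w|^2)^{-2-2k}$ and $\Tr\Tb\asymp\int_{\DD}(1-|w|^2)^{-2-2k}\,d\m(w)$, which is finite precisely under \eqref{finite}. For the \emph{trace formula}, I would instead start from $\int_{\DD}|1-\overline z w|^{-4}\,dA(z)=(1-|w|^2)^{-2}$ (equivalent to the Berezin transform of the constant function $1$ being $1$) and differentiate under the integral sign in $w$, obtaining $\big((k+1)!\big)^2 J(w)=\pd^k_w\pa^k_w\big[(1-|w|^2)^{-2}\big]$; substituting back,
\[
\Tr\Tb=\int_{\DD}\pd^k_w\pa^k_w\big[(1-|w|^2)^{-2}\big]\,d\m(w)=\Big(\F,\tfrac{1}{(1-|w|^2)^2}\Big),
\]
which is \eqref{trace}. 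When \eqref{finite} holds this last integral converges absolutely, so the pairing on the right is well defined even though $\F\notin\Ec'(\DD)$ if $\m$ is not compactly supported; and the two descriptions of $J(w)$ are consistent, since $\pd^k\pa^k(1-|w|^2)^{-2}$ is positive and itself $\asymp(1-|w|^2)^{-2-2k}$.

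I expect no serious obstacle: the differentiation of $\k_z$ and of the kernel integral, the cancellation of $(1-|z|^2)^2$, and the interchange of integrations (justified by non-negativity) are all routine. The only points needing care are bookkeeping ones --- tracking the constant $\big((k+1)!\big)^2$, invoking the standard boundary-integral asymptotics with the correct exponents, and formulating the trace at the level of the measure $\m$ rather than of the distribution $\F$, which need not be compactly supported.
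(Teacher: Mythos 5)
Your proposal is correct and follows essentially the same route as the paper: non-negativity of the form, Zhu's trace identity, the explicit computation $\widetilde\Tb(z)=((k+1)!)^2(1-|z|^2)^2|z|^{2k}\int_{\DD}|1-\bar z w|^{-4-2k}d\m(w)$, Tonelli/Fubini to swap the integrals, the Forelli--Rudin bound for finiteness, and the identity $\int_{\DD}|1-\bar z w|^{-4}dA(z)=(1-|w|^2)^{-2}$ (the Berezin transform of $\mathbf{1}$) to produce \eqref{trace}. The only cosmetic differences are that you obtain the trace formula by differentiating the kernel identity under the integral sign where the paper first swaps the order of integration and then recognizes the Berezin transform of the constant, and that your two-sided estimate $J(w)\asymp(1-|w|^2)^{-2-2k}$ slightly strengthens the paper's one-sided majorization, which is all that the stated sufficiency requires.
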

\begin{proof}We start by establishing the trace class property. Our operator is nonnegative, since the sesquilinear form $\Fb_{k,k;\mu}(f,f)$ is nonnegative. By \eqref{fundamental property},
it is sufficient to prove that
\begin{equation}\label{fin1}
    \int_{\DD}\widetilde\Tb(z)d\l(z)<\infty.
\end{equation}
We have
\begin{gather}\label{fin2}
    \widetilde\Tb(z)=\la\Tb \k_z(.),\k_z(.)\ra=\int_{\DD}(\m, \pd^k\pa^k\k_z\overline{\k_z})\\\nonumber =((k+1)!)^2|z|^{2k}(1-|z|^2)^{2}\int_{\DD}|1-\bar{w}z|^{-4-2k}d\mu.
\end{gather}
We substitute the last expression into \eqref{fin1}, to obtain the finiteness condition
\begin{equation*}
    \int_{\DD} |z|^{2k} \int_{\DD}|1-\bar{w}z|^{-4-2k}d\m(w) dA(z)<\infty;
\end{equation*}
the latter, by the Fubini theorem, is equivalent to
\begin{equation}\label{fin3}
    \int_{\DD}d\mu(w)\int_{\DD}|z|^{2k}|1-\bar{w}z|^{-4-2k}dA(z)<\infty.
\end{equation}
The inner integral, the one in $z$ variable, can be majorated by $C(1-|w|^2)^{-2-2k}$. Therefore, the expression in \eqref{fin3} is no greater than $\int_{\DD}(1-|w|^2)^{-2-2k}d\m(w)$, just what we needed.

Now we can justify the formula \eqref{trace} for the trace of the operator $\Tb$.

By \eqref{fundamental property}, \eqref{fin1}, we have
\begin{equation}\label{trace5}
    \tr \Tb=\int_{\DD}(\F, |\k_z(.)|^2)(1-|z|^2)^2dA(z)=\int_{\DD}\int_{\DD}d\m(w)\pd^k\pa^k|1-z\bar{w}|^{-4} dA(z)
\end{equation}
As it was just shown, under the condition \eqref{finite} the double integral converges absolutely and we may change the order of integration in \eqref{trace5}, obtaining
\begin{equation}\label{trace6}
     \tr \Tb =\left(\F, \int_{\DD}{|1-z\bar{w}|^4}{dA(z)}\right)=\left(\F, (1-|w|^2)^{-2} \int_{\DD}\frac{(1-|w|^2)^2}{|1-z\bar{w}|^4}dA(z)\right).
\end{equation}
The inner integral in \eqref{trace6} is exactly the Berezin transform of the constant function $\pmb{\pmb{1}}$. Since this function is harmonic, its Berezin transform coincides with itself. This gives us
\begin{equation}\label{trace7}
    \tr \Tb=\left(\F, \frac1{(1-|w|^2)^2} \pmb{\pmb{1}}\right)=\left(\F, \frac1{(1-|w|^2)^2}\right).
\end{equation}
\end{proof}

Now we are able to resolve the trace class problem for more general distributions~$\F$.

\begin{theorem}\label{FinitTh.2}Let the distributional symbol $\F$ have the form $\F=\pd^\a\pa^\b \m$, where $\m$ is a $k$-Carleson measure, with $2k=\a+\b$. Then, as soon as
\begin{equation}\label{general}
    \int_{\DD}(1-|w|^2)^{-2k-2}d|\m|(w)<\infty,
\end{equation}
the operator $\Tb_{\F}$ belongs to the trace class $\Sf^1$ and
\begin{equation}\label{trace88}
    \tr \Tb=\left(\F,\frac{1}{(1-|w|^2)^2}\right).
\end{equation}
\end{theorem}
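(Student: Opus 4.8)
The plan is to reduce the statement to two ingredients already available: a Hilbert--Schmidt factorization for the trace-class membership, and the $\Sf^1$-branch of Theorem~\ref{Zhu.gener.th} together with an explicit Berezin-transform computation for the trace formula. Throughout write $\Tb_\F=\Tb_{\a,\b,\m}$, put $\rho(w)=1-|w|^2$, and let $\omega=d\m/d|\m|$, so $|\omega|\equiv1$ $|\m|$-a.e.

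\textbf{Trace-class membership.} The operator $\Tb_\F$ factors through $L^2(|\m|)$: with $D_j\colon\mathcal{A}^2(\DD)\to L^2(|\m|)$, $D_jf=\pd^jf$, and $\Mcc_\omega$ the (unitary) operator of multiplication by $\omega$ on $L^2(|\m|)$, one has $\Tb_\F=(-1)^{\a+\b}D_\b^{*}\Mcc_\omega D_\a$. Since $D_\a^{*}D_\a$ alone need not be trace class under \eqref{general} when $\a\neq\b$, I would distribute the derivatives symmetrically: inserting $\rho^{-\gamma}\rho^{\gamma}=\1$ with $\gamma=(\a-\b)/2$ gives $\Tb_\F=(-1)^{\a+\b}(\rho^{-\gamma}D_\b)^{*}\Mcc_\omega(\rho^{\gamma}D_\a)$. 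Using the orthonormal basis $e_n(w)=\sqrt{n+1}\,w^n$ and the elementary bound $\sum_n|\pd^\a e_n(w)|^2\le C_\a\,\rho(w)^{-2\a-2}$ (obtained by summing the resulting power series in $|w|^2$), one computes
\begin{equation*}
\|\rho^{\gamma}D_\a\|_{\Sf^2}^2=\int_\DD\rho^{2\gamma}\Bigl(\sum_n|\pd^\a e_n|^2\Bigr)\,d|\m|\le C_\a\int_\DD\rho^{\,2\gamma-2\a-2}\,d|\m|=C_\a\int_\DD\rho^{-2k-2}\,d|\m|<\infty,
\end{equation*}
since $2\gamma-2\a-2=-\a-\b-2=-2k-2$ and \eqref{general} holds; the very same exponent appears in $\|\rho^{-\gamma}D_\b\|_{\Sf^2}^2$. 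Hence both factors are Hilbert--Schmidt and $\|\Tb_\F\|_{\Sf^1}\le\|\rho^{-\gamma}D_\b\|_{\Sf^2}\,\|\rho^{\gamma}D_\a\|_{\Sf^2}<\infty$. (Equivalently, $\rho^{2\gamma}|\m|$ and $\rho^{-2\gamma}|\m|$ are nonnegative measures to which Theorem~\ref{Th.distr} applies with exponents $\a$ and $\b$, yielding the two Hilbert--Schmidt bounds directly.)

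\textbf{The trace formula.} Now that $\Tb_\F\in\Sf^1$, Theorem~\ref{Zhu.gener.th} gives $\tr\Tb_\F=\int_\DD\widetilde{\Tb_\F}(z)\,d\l(z)$. From $\pd_w^j\k_z(w)=(1-|z|^2)(j+1)!\,\bar z^{\,j}(1-\bar zw)^{-j-2}$ one obtains
\begin{equation*}
\widetilde{\Tb_\F}(z)=\Fb_{\a,\b,\m}[\k_z,\k_z]=(-1)^{\a+\b}(\a+1)!(\b+1)!\,(1-|z|^2)^2\,\bar z^{\,\a}z^{\,\b}\int_\DD\frac{d\m(w)}{(1-\bar zw)^{\a+2}(1-z\bar w)^{\b+2}}.
\end{equation*}
Substituting into $\int_\DD(\cdot)\,\rho(z)^{-2}\,dA(z)$ cancels the factor $(1-|z|^2)^2$; the standard estimate $\int_\DD|1-z\bar w|^{-(\a+\b+4)}\,dA(z)\le C\rho(w)^{-2k-2}$ together with \eqref{general} makes the resulting double integral absolutely convergent, so Fubini applies. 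Interchanging the order of integration leaves the inner integral $\int_\DD\bar z^{\,\a}z^{\,\b}(1-\bar zw)^{-\a-2}(1-z\bar w)^{-\b-2}\,dA(z)$, which, evaluated term by term via $\int_\DD z^m\bar z^n\,dA=\delta_{mn}/(m+1)$, equals $\tfrac{1}{(\a+1)!(\b+1)!}\pd_w^\a\pa_w^\b\bigl[(1-|w|^2)^{-2}\bigr]$. Therefore
\begin{equation*}
\tr\Tb_\F=(-1)^{\a+\b}\int_\DD\pd_w^\a\pa_w^\b\bigl[(1-|w|^2)^{-2}\bigr]\,d\m(w)=\Bigl(\F,\frac{1}{(1-|w|^2)^2}\Bigr),
\end{equation*}
the last equality being the definition of the pairing, legitimate since $\pd_w^\a\pa_w^\b[(1-|w|^2)^{-2}]=O(\rho^{-2k-2})$ near $\pd\DD$ and \eqref{general} holds.

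\textbf{Main obstacle.} The genuine difficulty, absent in Theorem~\ref{Th.distr}, is that for $\a\neq\b$ the operator $\Tb_\F$ is not self-adjoint, so neither the non-negativity clause of Theorem~\ref{Zhu.gener.th} nor a spectral decomposition is directly available; the remedy is precisely the balanced weight $\gamma=(\a-\b)/2$, which splits the $\a+\b=2k$ derivatives evenly between the two Hilbert--Schmidt factors so that each lands exactly on the integrability threshold \eqref{general}. The only other point needing care is the algebraic identity expressing the inner $z$-integral as $\pd_w^\a\pa_w^\b[(1-|w|^2)^{-2}]$ up to the factorial constant; this is a routine comparison of two power series, and it is what reconciles the present trace with the symmetric formula \eqref{trace}.
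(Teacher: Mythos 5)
Your proposal is correct and follows essentially the same route as the paper: the balanced weights $\rho^{\pm(\a-\b)/2}$ are exactly the paper's factors $(1-|w|^2)^{\a-k}$ and $(1-|w|^2)^{\b-k}$ in its factorization $\Tb_\F=H_2^*H_1$ through $L^2$ of the measure, and the trace formula is likewise obtained from Theorem \ref{Zhu.gener.th} by checking absolute convergence and applying Fubini. The only (harmless) deviations are implementation details: you verify the Hilbert--Schmidt property by a direct orthonormal-basis computation instead of invoking Theorem \ref{Th.distr} for the modified measure (though you note that alternative), you handle complex $\m$ via the unimodular multiplier $\Mcc_\omega$ rather than reducing to nonnegative $\m$, and you evaluate the inner $z$-integral by power series rather than through the harmonicity of $\pmb{1}$ under the Berezin transform.
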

\begin{proof}As usual, it suffices to consider the case of a nonnegative measure $\m$.
We represent the sesquilinear form of the operator $\Tb_{\F}$ as
\begin{equation}\label{form1}
    \Tb_\F(f,g)=(-1)^{\a+\b}\int_{\DD}((1-|w|^2)^{\a-k}\pd^{\a}f(w))\times\overline{((1-|w|^2)^{\b-k}\pd^{\a}g(w))}d\m(w).
    \end{equation}
We introduce two  operators acting from the Bergman space $\mathcal{A}^2(\mathbb{D})$ to the weighted $L^2$ space, $L^2_{\m}(\DD)$, in the following way:
\begin{equation}\label{operators}
    H_1: f(w)\mapsto (1-|w|^2)^{\a-k}\pd^{\a}f(w);\, H_2: g(w)\mapsto (1-|w|^2)^{\b-k}\pd^{\b}g(w).
\end{equation}
Using these operators, the sesquilinear form  $\Tb_\F(f,g)$ can be written as
\begin{equation}\label{form4}
    \Tb_\F(f,g)=\la H_1 f, H_2g\ra_{L^2_{\m}(\DD)}=\la H_2^*H_1 f, g\ra_{\mathcal{A}^2(\mathbb{D})}.
\end{equation}
This means that the Toeplitz operator $\Tb_\F$ is factorized as
\begin{equation}\label{form5}
    \Tb_F= H_2^*H_1.
\end{equation}
Therefore, we need to prove that both operators $H_1,H_2$ belong to the Hilbert-Schmidt class $\Sf^2$. Consider, for example, $H_1$. We will show now that the nonnegative operator $H_1^*H_1$ (acting in the Bergman space $\mathcal{A}^2(\mathbb{D})$) belongs to the trace class $\Sf^1$. In fact, the sesquilinear  form of this operator equals
\begin{equation}\label{Form6}
    \la H_1^* H_1 f,g \ra_{\mathcal{A}^2(\mathbb{D})}=\la H_1 f, H_1 g\ra_{L^2_\m}=\int_{\DD} (\pd^\a f\pd^{\a}g)(1-|w|^2)^{2\a-2k}d\mu.
\end{equation}
We have already established  in Theorem \ref{Th.distr} (applied to the measure $(1-|w|^2)^{2\a-2k} d\mu(w)$ instead of $d\m(w)$) that the condition \eqref{general} suffices for the operator defined by the sesquilinear form \eqref{Form6} to belong to $\Sf^1$. Therefore, the operator $H_1$ is a Hilbert-Schmidt one. The same reasoning takes care of $H_2$.  Finally this means that $H_2^*H_1$ is a trace class operator.

To prove \eqref{trace88}, we need, similar to the reasoning in Theorem \ref{Th.distr}, to show that it is possible to change the order of integration in the expression

\begin{equation}\label{trace9}
    \tr \Tb=\int_{\DD}(\F, |\k_z(.)|^2)(1-|z|^2)^2dA(z)=\int_{\DD}\int_{\DD}d\m(w)\pd^\a\pa^\b|1-z\bar{w}|^{-4} dA(z).
\end{equation}
We represent \eqref{trace9} as
\begin{equation}\label{trace10}
    \tr \Tb = (\a+1)!(\b+1)!\int_{\DD}z^{\b}\bar{z}^{\a}\int_{\DD}(1-z\bar{w})^{-2-\b}(1-\bar{z}{w})^{-2-\a}d\m(w) dA(z),
\end{equation}
and then transform, similarly to \eqref{form1}:

\begin{equation}\label{trace11}
\tr \Tb = (\a+1)!(\b+1)!\int_{\DD}z^{\b}\bar{z}^{\a}\int_{\DD}(1-|w|^2)^{\b-k}(1-z\bar{w})^{-2-\b}(1-|w|^2)^{\a-k}(1-\bar{z}{w})^{\a}d\m(w) dA(z).
\end{equation}
The last integral is estimated by applying the Cauchy-Schwartz inequality,
\begin{gather}\label{trace12}
   | \tr \Tb |\le (\a+1)!(\b+1)!\left(\int_{\DD}(1-|w|^2)^{2\b-2k}|1-z\bar{w}|^{-4-2\b}d\m(w) dA(z)\right)^{\frac12}\times \\\nonumber\left(\int_{\DD}(1-|w|^2)^{2\a-2k}|1-z\bar{w}|^{-4-2\a}d\m(w) dA(z)\right)^{\frac12},
\end{gather}
and both integrals in \eqref{trace12} converge under our conditions. Therefore, the double integral  in \eqref{trace11} converges absolutely and we may apply the Fubini theorem to change the order of integration. After this, the proof of \eqref{trace88} goes in the same way as in Theorem \ref{Th.distr}.
\end{proof}
\begin{remark}\label{rem1}We might have tried to  prove Theorem \ref{FinitTh.2} in a more straightforward way, actually, mimicking the reasoning in Theorem \ref{Th.distr}, i.e., by estimating the Berezin transform of the operator $\Tb_\F$ with $\F=\pd^\a\pa^\b \m$. This approach would lead us to estimating the integral
\begin{equation}\label{Form7}
    \int_{\DD}\int_{\DD}\bar{z}^\a z^\b(1-\bar{z}w)^{-2-\a}(1-z\bar{w})^{-2-\b}d\m(w) d A(z).
\end{equation}
However, on the one hand, we do not know a-priori that the integral in \eqref{Form6} converges absolutely, therefore, applying the Fubini theorem is not justified. Moreover, without the nonnegativity assumption, the formula \eqref{fundamental property} is not justified either -- one must know first that the operator is a trace class one.
However, having Theorem \ref{FinitTh.2} proved, we know already that the operator is trace class and therefore we can legally use formula \eqref{fundamental property}. After this, the change in the integration order becomes legal.
 \end{remark}

\section{Examples}

We illustrate the above on several examples involving $k$-Carleson measures considered in \cite{RV2}.

In what follows we will constantly use that, for $\alpha,\, \beta \in \mathbb{Z}_+$,
\begin{equation*}
 \partial^{\alpha}_w \frac{1}{(1-\overline{z}w)^2} = \frac{(\alpha+1)! \,\overline{z}^{\alpha}}{(1-\overline{z}w)^{2+\alpha}} \quad \textrm{and} \quad
 \partial^{\beta}_{\overline{w}} \frac{1}{(1-z\overline{w})^2} = \frac{(\beta+1)! \,z^{\beta}}{(1-z\overline{w})^{2+\beta}}.
\end{equation*}

\begin{example} {\rm
We start with the Carleson measure $(1+|z|)^{2k}dA(z)$, then, by \cite[Proposition 6.2]{RV2}, the measure $d\mu = (1-|z|)^{2k}(1+|z|)^{2k}dA(z) = (1-|z|^2)^{2k}dA(z)$ is a $k$-Carleson measure. Now with $\alpha + \beta \leq 2k$, consider
\begin{equation*}
 \Fb_{\alpha,\beta,\mu}[f,g] = (\partial^{\alpha}\overline{\partial}^{\beta}\mu,f\overline{g}),
\end{equation*}
the corresponding Toeplitz operator $\Tb_{\alpha,\beta,\mu} := \Tb_{\Fb_{\alpha,\beta,\mu}}$, and its Berezin transform
\begin{eqnarray*}
 \widetilde{\Tb}_{\Fb_{\alpha,\beta,\mu}}(z) &=& \Fb_{\alpha,\beta,\mu}(\k_z,\k_z) = \int_{\mathbb{D}} \partial^{\alpha}_w \k_z(w) \partial^{\beta}_{\overline{w}} \overline{\k}_z(w) (1-|w|^2)^{2k}dA(w) \\
&=& \int_{\mathbb{D}} \partial^{\alpha}_w \frac{1-|z|^2}{(1-\overline{z}w)^2} \partial^{\beta}_{\overline{w}} \frac{1-|z|^2}{(1-z\overline{w})^2} (1-|w|^2)^{2k} dA(w) \\
&=& (1-|z|^2)^2 (\alpha+1)!(\beta+1)!z^{\beta}\overline{z}^\alpha \int_{\mathbb{D}} \frac{(1-|w|^2)^{2k}}{(1-\overline{z}w)^{2+\alpha}(1-z\overline{w})^{2+\beta}} dA(w).
\end{eqnarray*}
For $\alpha = \beta$, we have
\begin{eqnarray*}
 \widetilde{\Tb}_{\Fb_{\alpha,\alpha,\mu}}(z) &=& (1-|z|^2)^2 [(\alpha+1)!]^2 |z|^{2\alpha} \int_{\mathbb{D}} \frac{(1-|w|^2)^{2k}}{|1-\overline{z}w|^{2(2+\alpha)}} dA(w) \\
&=& \frac{\alpha!(\alpha+1)!|z|^{2\alpha}}{(1-|z|^2)^{\alpha}}\, B_{\alpha}\left((1-|w|^2)^{2k-\alpha}\right) (z),
\end{eqnarray*}
where $B_{\alpha}$ is the Berezin transform on weighted Bergman space $\mathcal{A}^2_{\alpha}(\mathbb{D})$.

Formula \eqref{trace} permits us to calculate easily the trace of the operator $\Tb_{\alpha,\beta,\mu}$ with $\alpha + \beta \leq 2(k-1)<2k$:
\begin{equation*}
 \mathrm{tr}\, \Tb_{\alpha,\beta,\mu} = \left( \Fb_{\alpha,\beta,\mu}, \frac{1}{(1-|w|^2)^2} \right) = \int_{\mathbb{D}} \left[\partial^{\alpha}_w\partial^{\beta}_{\overline{w}} \frac{1}{(1-w\overline{w})^2}\right](1-|w|^2)^{2k} dA(w).
\end{equation*}
In particular, for an integer or half-integer $k \geq2$ and $\alpha=\beta=1$, we have
\begin{eqnarray*}
 \mathrm{tr}\, \Tb_{1,1,\mu} &=& \int_{\mathbb{D}} \left[\partial_w\partial_{\overline{w}} \frac{1}{(1-w\overline{w})^2}\right](1-|w|^2)^{2k} dA(w) = \int_{\mathbb{D}} \frac{1+2|w|^2}{(1-|w|^2)^{4-2k}}dA(w) \\
&=& \int_0^1 \frac{1+2r}{(1-r)^{4-2k}}dr = \frac{k}{(k-1)(2k-3)}.
\end{eqnarray*}

}
\end{example}
\begin{example}
{\rm Given a point $z_0 \in \mathbb{D}$, consider the distribution being the derivative of the $\delta$-distribution at $z_0$, i.e., $\partial^{\alpha} \overline{\partial}^{\beta}\delta_{z_0}$. The corresponding sesquilinear form is given by
\begin{equation*}
 \Fb_{\alpha,\beta,z_0}(f,g) = (\partial^{\alpha} \overline{\partial}^{\beta}\delta_{z_0},f\overline{g}) = (-1)^{\alpha+\beta} (\delta_{z_0}, \, \partial^{\alpha}_w f\partial^{\beta}_{\overline{w}}\overline{g})=f^{\a}(z_0)\bar{g}^{\b}(z_0).
\end{equation*}
We introduce now the corresponding Toeplitz operator $\Tb_{\alpha,\beta,z_0}:= \Tb_{\Fb_{\alpha,\beta,z_0}}$, and calculate its Berezin transform
\begin{eqnarray*}
\widetilde{\Tb}_{\alpha,\beta,z_0}(z) &=& \Fb_{\alpha,\beta,z_0}(\k_z,\k_z) =
(-1)^{\alpha+\beta} \left(\delta_{z_0}, \, \partial^{\alpha}_w \frac{1-|z|^2}{(1-\overline{z}w)^2} \partial^{\beta}_{\overline{w}} \frac{1-|z|^2}{(1-z\overline{w})^2}\right) \\
&=& (-1)^{\alpha+\beta} (1-|z|^2)^2 \frac{(\alpha+1)! \overline{z}^{\alpha}}{(1 - \overline{z}z_0)^{2+\alpha}} \frac{(\beta+1)! z^{\beta}}{(1-z\overline{z}_0)^{2+\beta}}.
\end{eqnarray*}
Therefore, by \eqref{trace},
\begin{equation*}
 \mathrm{tr}\, \Tb_{\alpha,\beta,z_0} = \left(\partial^{\alpha} \overline{\partial}^{\beta}\delta_{z_0}, \, \frac{1}{(1-w\overline{w})^2} \right) =
 (-1)^{\alpha+\beta}\left[\partial^{\alpha}_w\partial^{\beta}_{\overline{w}}\frac{1}{(1-w\overline{w})^2} \right]_{w=z_0}.
\end{equation*}
We consider now several particular cases.

Let $z_0=0$, then
\begin{equation*}
\widetilde{\Tb}_{\alpha,\beta,0}(z) = (-1)^{\alpha+\beta} (1-|z|^2)^2 (\alpha+1)! (\beta+1)! \overline{z}^{\alpha}z^{\beta},
\end{equation*}
and, by \eqref{fundamental property},
\begin{equation*}
 \mathrm{tr}\, \Tb_{\alpha,\beta,z_0} = (-1)^{\alpha+\beta} (\alpha+1)! (\beta+1)!
\int_{\mathbb{D}}\overline{z}^{\alpha}z^{\beta} dA(z) =
\begin{cases}
 \alpha!(\alpha+1)!, & \mathrm{if} \ \alpha = \beta \\
 0, & \mathrm{if} \ \alpha \neq \beta
\end{cases}
.
\end{equation*}

Let $\alpha=\beta$, then, by \eqref{fundamental property},
\begin{equation*}
\mathrm{tr}\, \Tb_{\alpha,\alpha,z_0} = [(\alpha+1)!]^2 \int_{\mathbb{D}} \frac{|z|^{2\alpha}}{|1-z\overline{z}_0|^{2(2+\alpha)}}dA(z) = [(\alpha+1)!]^2 \left\| \frac{z^{\alpha}}{(1-z\overline{z}_0)^{2+\alpha}}\right\|^2_{\mathcal{A}^2(\mathbb{D})}.
\end{equation*}
In particular, if $\alpha=\beta=1$,
\begin{equation*}
 \mathrm{tr}\, \Tb_{1,1,z_0} = 4 \left\| \frac{z}{(1-z\overline{z}_0)^3}\right\|^2_{\mathcal{A}^2(\mathbb{D})},
\end{equation*}
while, by \eqref{trace},
\begin{equation*}
 \mathrm{tr}\, \Tb_{1,1,z_0} = \left( \delta_{z_0}, \, \partial_w \partial_{\overline{w}}\frac{1}{(1-w\overline{w})^2} \right) = 2 \frac{1+2|z_0|^2}{(1-|z_0|^2)^4},
\end{equation*}
implying that
\begin{equation*}
 \left\| \frac{z}{(1-z\overline{z}_0)^3}\right\|_{\mathcal{A}^2(\mathbb{D})} = \frac{\sqrt{1+2|z_0|^2}}{\sqrt{2}(1-|z_0|^2)^2}.
\end{equation*}

Let finally $\beta=0$, then
\begin{equation*}
 \widetilde{\Tb}_{\alpha,0,z_0}(z) = (-1)^{\alpha} (1-|z|^2)^2 \frac{(\alpha+1)! \overline{z}^{\alpha}}{(1 - \overline{z}z_0)^{2+\alpha}} \frac{1}{(1-z\overline{z}_0)^2},
\end{equation*}
and, by \eqref{trace},
\begin{equation*}
 \mathrm{tr}\, \Tb_{\alpha,0,z_0} = \left(\partial^{\alpha} \delta_{z_0}, \, \frac{1}{(1-w\overline{w})^2} \right) = (-1)^{\alpha}\frac{(\alpha+1)!\, \overline{z}_0^{\alpha}}{(1-|z_0|^2)^{2+\alpha}}.
\end{equation*}
}
\end{example}

\begin{example}
{\rm
Given $r_0 \in (0,1)$, we consider the distribution $\F = \partial_r \delta_{r_0} \otimes (2\pi)^{-1} d\theta$, $z = re^{i\theta}$. This is the radial derivative of the measure concentrated on the circle with radius $r$. We calculate the trace of the operator $\Tb_{\F}$ using \eqref{trace}:
\begin{equation*}
 \mathrm{tr} \, \Tb_{\F} = \left(\partial_r \delta_{r_0} \otimes (2\pi)^{-1} d\theta, \, \frac{1}{(1-r^2)^2}\right) = - \frac{4r_0}{(1-r_0^2)^3}.
\end{equation*}
}
\end{example}


\begin{thebibliography}{99}
\bibitem{AlexRoz} A.~Alexandrov, G.~Rozenblum, \emph{Finite rank Toeplitz operators: some extensions of D.Luecking's theorem}, J. Funct. Anal. \textbf{256} (2009)  2291--2303.
\bibitem{ElFall} O.~El-Fallah, H.~Mahzouli, I.~Marrhich, H. Naqos. \emph{Asymptotic behavior of eigenvalues of Toeplitz operators on the weighted analytic spaces} J. Funct. Anal., \textbf{270} (2016) 4614-4670.
\bibitem{Gelf}I.M.~Gelfand, G.E.~Shilov,  Generalized functions, II. Spaces of fundamental and generalized functions (AP, 1968)
\bibitem{Luecking} D. Luecking,
 \emph{Trace Ideal Criteria for Toeplitz Operators.} J. Funct. Anal.  \textbf{73} (1987) 345-368.

\bibitem{parfenov} O.Parfenov, \emph{The asymptotic behavior of singular numbers of imbedding operators of some classes of analytic functions}. (Russian) Mat. Sb. (N.S.) 115(157) (1981), no. 4, 632--641; translation: Mathematics of the USSR-Sbornik, 1982, 43:4, 563--571.
\bibitem{Pelaez}J.~Pelaez,  J.~R\"atty\"a , K.~Sierra, \emph{Berezin Transform and Toeplitz Operators on Bergman Spaces Induced by Regular Weights}. J. Geom. Anal. \textbf{28} (2018) 656–687
\bibitem{PeTaVi3}A.~Per\"al\"a, J.~Taskinen, and J.~Virtanen, \emph{Toeplitz operators with distributional symbols on Bergman spaces.} Proc. Edinb. Math. Soc. (2) 54 (2011), no. 2, 505-514.



\bibitem{RV1} G.~Rozenblum, N.~Vasilevski,\emph{Toeplitz operators defined by sesquilinear forms: Fock space case}. Journal of Functional Analysis, \textbf{267} (2014), 4399--4430.


\bibitem{RV2} G. Rozenblum, N. Vasilevski, \emph{Toeplitz operators defined by sesquilinear forms: Bergman space case},  J. Math. Sci. (N.Y.) 213 (2016), no. 4, 582--609.




\bibitem{Zhu} K.~Zhu,
 Operator theory in function spaces. Second edition. Mathematical Surveys and Monographs, 138. American Mathematical Society, Providence, RI, 2007.






\end{thebibliography}
\end{document}